\documentclass{article}
\usepackage{amsmath,
amssymb,
amsthm,
amsfonts,
latexsym,
hyperref,
enumerate,
color
}

\setlength{\textwidth}{6.3in}
\setlength{\textheight}{8.7in}
\setlength{\topmargin}{0pt}
\setlength{\headsep}{0pt}
\setlength{\headheight}{0pt}
\setlength{\oddsidemargin}{0pt}
\setlength{\evensidemargin}{0pt}
\usepackage{graphicx} 

\def\F{{\mathbb F}}
\def\S{\mathcal{S}}

\def\D{\mathcal{D}}
\def\PG{\mbox{\rm PG}}

\bibliographystyle{plain}

\newtheorem{lemma}{Lemma}[section]
\newtheorem{theorem}[lemma]{Theorem}
\newtheorem{corollary}[lemma]{Corollary}

\newtheorem{remark}[lemma]{Remark}
\newtheorem{result}[lemma]{Result}

\title{A note on strong blocking sets \\and higgledy-piggledy sets of lines}
\author{Stefano Lia \and Geertrui Van de Voorde}
\date{}

\begin{document}

\maketitle

\begin{abstract} 
This paper studies {\em strong blocking sets} in the $N$-dimensional finite projective space $\PG(N,q)$. 
We first show that certain unions of blocking sets cannot form strong blocking sets, which leads to a new lower bound on the size of a strong blocking set in $\PG(N,q)$.
 Our second main result shows that, for $q>\frac{2}{ln(2)}(N+1)$, there exists a subset of $2N-2$ lines of a Desarguesian line spread in $\PG(N,q)$, $N$ odd, in {\em higgledy-piggledy arrangement}; thus giving rise to a strong blocking set of size $(2N-2)(q+1)$.

\end{abstract}

\section{Introduction}
\subsection{Strong blocking sets and linear codes}\label{subs}
We start by introducing the necessary  definitions for blocking sets and linear codes. A {\em$t$-fold  blocking set} in a projective space $\PG(N,q)$, $q=p^h$, $p$ prime, is a set of points $\mathcal{B}$ such that every hyperplane contains at least $t$ points of $\mathcal{B}$. A $1$-fold blocking set is simply called a {\em blocking set}. Note that in this paper, we only consider blocking sets with respect to hyperplanes. A blocking set $\mathcal{B}$ is called {\em minimal} if for every point $P\in \mathcal{B}$, the set $\mathcal{B}\setminus\{P\}$ is not a blocking set; that is,  every point of $\mathcal{B}$ lies on at least one tangent hyperplane to the set $\mathcal{B}$. A blocking set in $\PG(N,q)$ is called {\em strong} if it meets every hyperplane in a set of points, spanning that hyperplane. Depending on the context, strong blocking sets have also been called {\em cutting} blocking sets \cite{bartoli} or {\em generator sets} \cite{FancsaliSziklai2014} in the literature. It follows from the definition that a strong blocking set in $\PG(N,q)$ is an $N$-fold blocking set; but the converse is not necessary true (unless $N=2$).

The study of $1$-fold blocking sets forms a classical problem within finite geometry, and the connection with problems from coding theory (e.g. the study of codewords of small weight in the code of points and lines of a projective plane) has formed an extra motivation for their study. For a survey about $1$-fold blocking sets, see \cite{survey}. We will now see that strong blocking sets too are connected to coding theory.

A linear {\em code} $C$ over $\mathbb{F}_q$ of {\em length} $n$ and {\em dimension} $k$ is a $k$-dimensional vector subspace of $\mathbb{F}_q^n$. The vectors of $C$ are called {\em codewords}. A code is called {\em minimal} if the {\em support} of any codeword $v$ never properly contains the support of a linearly independent codeword. The support of a codeword is the set of positions in which this codeword is non-zero.

Strong blocking sets were first introduced in \cite{DavydovGiuliettiMarcuginiPambianco2011} in relation to covering codes and saturating sets. 
A {\em $\rho$-covering code} $C\leq \mathbb{F}_q^n$ is a code with the property that every vector in $\mathbb{F}_q^n$ has distance at most $\rho$ from a code word of $C$, and $\rho$ is the least integer with such property. The integer $\rho$ is the {\em covering radius} of $C$.
A {\em $\rho$-saturating set} in $\PG(N,q)$ is a set $\mathcal{S}$ of points such that every point $P$ of $\PG(N,q)$ lies on a subspace spanned by at most $\rho+1$ points of $\mathcal{S}$. This notion relates to classical problems in algebraic geometry about secant varieties: a $\rho$-saturating set $\mathcal{S}$ can be defined as a set such that the $\rho$-th secant variety of any variety meeting $\PG(N,q)$ in $\mathcal{S}$ contains each point of $\PG(N,q)$.

A strong blocking set in $\PG(N,q)$ forms, by embedding $\PG(N, q)$ in $\PG(N, q^N )$, an $(N-1)$-saturating set of $\PG(N, q^N )$, see \cite[Theorem 3.2]{DavydovGiuliettiMarcuginiPambianco2011}. Therefore, an upper bound on the minimum size of a strong blocking set leads to a corresponding bound on the minimum size of a saturating set.

In this area, typical problems are to find small upper bounds for the minimum size of a covering code of given covering radius and dimension and to find families of codes with good asymptotic covering density.
Strong blocking set can be used to deal with these problems, see for example \cite{lins,DavydovGiuliettiMarcuginiPambianco2011}.

More recently, strong blocking sets gained further interest for the coding theory community as they were proved to be in one to one correspondence with minimal codes, \cite{AlfaranoBorelloNeri2022,alfarano,TangQiuLiaoZhou2021}; it has been shown that a minimal code of length $n$ and dimension $k$ corresponds to a strong blocking set containing $n$ points in $\PG(k-1,q)$. Hence, a lower bound on the size of a strong blocking set in $\PG(k-1,q)$ leads to a lower bound on the length of a minimal $[n,k]$-code. We will use this equivalence to translate Theorem \ref{main1} into the coding theoretical language (see Corollary \ref{cor}.)


\subsection{Strong blocking sets and higgledy-piggledy sets}

One way of constructing strong blocking sets is to take the union of subspaces in {\em higgledy-piggledy} arrangment; where the definition of a set $S$ of subspaces in higgledy-piggledy arrangment is precisely that the points contained in at least one of the elements of $S$ form a strong blocking set. 


The following results are known for higgledy piggledy line sets in $\PG(N,q)$, $N$ odd.
\begin{result}\label{generalbound}\begin{itemize}\item[(i)] \cite{heger22} A higgledy-piggledy set of lines in $\PG(N,q)$ contains at least $N+\lfloor \frac{N}{2}\rfloor-\lfloor \frac{N-1}{q}\rfloor$ elements.
\item[(ii)] \cite{FancsaliSziklai2014} There exists a higgledy-piggledy set of $2N-1$ lines in $\PG(N,q)$, $N$ odd, $q\geq 2N-1$.
\end{itemize}
\end{result}


We will show in Theorem \ref{main2} that for $q$ large enough, it is possible to give a construction with one line fewer than in Result \ref{generalbound} (ii). More precisely, we show that if $q>\frac{2}{ln(2)}(N+1)$ there exists a higgledy-piggledy set of lines in $\PG(N,q)$, $N$ odd, of size $2N-2$.


It is a natural idea to try and construct higgledy-piggledy sets of subspaces whose elements are contained in a {\em spread}. A spread of $\PG(N,q)$ is a set of subspaces partitioning its point set and an elementary construction via {\em field reduction} gives rise to {\em Desarguesian spreads}. We refer to \cite{fieldreduction} for more information on field reduction. 
We see that when a higgledy-piggledy set $\mathcal{L}$ is contained in a Desarguesian spread, it is the image of some point set $\mathcal{P}$ under field reduction. The property that $\mathcal{L}$ is a higgledy-piggledy set translates into a property regarding {\em $\F_q$-linear sets}  containing $\mathcal{P}$. More precisely, it was observed in \cite{lins} that a set $\S$ of points in $\PG(\frac{N-1}{2},q^2)$  which is not contained in any linear set of rank at most $N-1$ is a higgledy-piggledy of $\PG(N,q)$ under field reduction. In \cite{bartoli}, the authors find a set of $7$ lines contained in a Desarguesian spread of $\PG(5,q)$ by essentially making use of this point of view; that is, they found a suitable point set of size $7$ in $\PG(2,q^2)$. The higgledy-piggledy set found in this paper will be constructed using the elements of a Desarguesian line spread in $\PG(2n-1,q)$ rather than a point set in $\PG(n-1,q^2)$. We will use of the well-known fact that the elementwise stabiliser of a Desarguesian line spread acts transitively on the points of each spread element (see e.g. \cite{fieldreduction}). A corollary of this property is the following:
\begin{result}\label{stabiliser}  Let $\mathcal{S}$ be a subset of a Desarguesian line spread $\mathcal{D}$ in $\PG(N,q)$, $N$ odd, and let $P_1$ and $P_2$ be points lying on the same line $L$ of $\mathcal{D}$. Let $s_i$ be the number of $t$-dimensional subspaces $\pi$ through $P_i$ such that $\pi$ meets all elements of $\mathcal{S}$. Then $s_1=s_2$.
\end{result}

\begin{remark}
A related result was recently derived in \cite{anurag}, where the authors show that {\em for $N$ sufficiently large}, there exists a strong blocking set in $\PG(N,q)$ of size $\frac{2N+2}{\log_q(\frac{q^4}{q^3-q+1})}(q+1)$ by using a union of a set of lines through the origin that forms a blocking set with respect to co-dimension $2$-spaces in an affine space. Note this strong blocking set arises from a line set in an affine space which is then projected onto the projective space so it does not arise from a higgledy-piggledy set of lines.
\end{remark}

\section{Strong blocking sets arising from the union of blocking sets}
We have seen in the introduction that every strong blocking set in $\PG(N,q)$ is an $N$-fold blocking set. 
It is easy to see that a line and a Baer subplane are both blocking sets in $\PG(N,q)$. Furthermore, it is well-known that if a blocking set in $\PG(N,q)$ does not contain a line, it has size at least that of a Baer subplane \cite{beutelspacher}. For the motivation of this section, consider the case $\PG(3,q)$: in that case, every strong blocking set is a triple blocking set but, as mentioned before, the union of three disjoint blocking sets does not necessarily give rise to a strong blocking set. We also know from Theorem \ref{generalbound} that it is not possible for the union of $3$ lines (which are blocking sets) to form a strong blocking set in $\PG(3,q)$.


The following result from \cite{bartoli}, which is valid in $\PG(3,q)$ where $q$ is a third power, is noteworthy in this context. 
\begin{result}
    There exists three subgeometries $\PG(3,q^{1/3})$ in $\PG(3,q)$, $q=q_0^3$, whose union forms a strong blocking set of size $3(q+q^{2/3}+q^{1/3}+1).$
\end{result}
This shows that it is possible to construct a strong blocking set as the union of three blocking sets; but not from three lines. This motivated the question on which unions of blocking sets in $\PG(3,q)$, and more generally, $\PG(N,q)$, can give rise to strong blocking sets.
A result of Bar\'at and Storme \cite{barat} (Result \ref{thm:barat}) shows that a  $N$-fold blocking set in $\PG(N,q)$ of small size necessarily contains the union of 
$N$ disjoint lines and/or Baer subplanes. So if a strong blocking set is small, it necessarily contains the union of lines and Baer subplanes, which explains why we investigate such unions.



Result \ref{generalbound}(i) shows that a set of lines of $\PG(N,q)$, $q\geq N$, forming a strong blocking set contains at least $N+\lfloor N/2\rfloor$ lines.
We first derive a similar result for lines and Baer subplanes in Corollary \ref{cor2}.

\begin{lemma}\label{lemmaunion} Let $q\geq N^2$, $N\geq 3$. Let $\mathcal{S}=\bigcup_{i=1}^k B_i$ be a strong blocking set of $\mathrm{PG}(N,q)$, given by the union of $k$ disjoint minimal blocking sets $B_i$ that are either lines are Baer subplanes. Then no hyperplane contains all but at most $N-1$ of the sets $B_i$.
\end{lemma}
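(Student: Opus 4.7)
The plan is to argue by contradiction: assume $H$ is a hyperplane containing all of the $B_i$ except, say, $B_1, \ldots, B_j$, with $j \leq N-1$. I will produce a different hyperplane $H^* \neq H$ such that $\mathcal{S} \cap H^* \subseteq \tau := H \cap H^*$; since $\tau$ is a proper subspace of $H^*$, this immediately contradicts the strong blocking property of $\mathcal{S}$. For each $i \leq j$ denote $X_i = B_i \cap H$: a single point $P_i'$ when $B_i$ is a line, or a Baer subline contained in the line $\ell_i = \alpha_i \cap H$ (with $\alpha_i$ the ambient plane of $B_i$) when $B_i$ is a Baer subplane. Since the $B_i$ with $i > j$ already lie in $H$, the requirement $\mathcal{S} \cap H^* \subseteq \tau$ reduces to $B_i \cap H^* \subseteq \tau$ for each $i \leq j$.

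The second step is to translate this into a containment constraint on $\tau$. For a line $B_i$, $B_i \cap H^* \subseteq \tau$ means $H^*$ is tangent to $B_i$ at $P_i'$, i.e.\ $P_i' \in \tau$ and $B_i \not\subseteq H^*$. For a Baer subplane $B_i$ it means $\alpha_i \cap H^* = \ell_i$, equivalently $\ell_i \subseteq \tau$ and $\alpha_i \not\subseteq H^*$. Hence $\tau$ must contain
\[
\pi^\dagger := \langle \{P_i' : B_i \text{ a line}\} \cup \{\ell_i : B_i \text{ a Baer subplane}\}\rangle \subseteq H,
\]
a subspace of $H$ of projective dimension at most $l + 2b - 1 = 2j - l - 1$, where $l$ counts lines and $b = j - l$ counts Baer subplanes among $B_1, \ldots, B_j$.

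The final step is a counting argument. A codimension-one subspace $\tau$ of $H$ with $\tau \supseteq \pi^\dagger$ exists as soon as $\dim \pi^\dagger \leq N - 2$; once $\tau$ is fixed, the pencil through $\tau$ has $q+1$ hyperplanes, one being $H$, and for each $i \leq j$ at most one further member contains $B_i$ (or its ambient plane $\alpha_i$) entirely. Thus at most $j \leq N-1$ hyperplanes in the pencil are ``bad'', and since $q \geq N^2 > N-1$ a good choice of $H^* \neq H$ exists; for this $H^*$ the containment $\mathcal{S} \cap H^* \subseteq \tau$ holds, producing the contradiction. The main obstacle is establishing $\dim \pi^\dagger \leq N - 2$ when there are many Baer subplanes among $B_1, \ldots, B_j$ (each contributing the whole line $\ell_i$ to the join); here the hypothesis $q \geq N^2$ should be decisive, exploited either via the $\sqrt{q}+1 \geq N+1$ points in each Baer subline $X_i$ to re-select a smaller-dimensional join, or by a refined counting that replaces the strict containment $\pi^\dagger \subseteq \tau$ with a weaker linear-dependence condition compatible with the strong blocking structure of $\mathcal{S}$.
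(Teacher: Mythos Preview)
Your overall strategy --- find a codimension-two subspace $\tau$ in $H$ and then a hyperplane $H^*\neq H$ through $\tau$ meeting $\mathcal{S}$ only inside $\tau$ --- is exactly the paper's strategy. The gap is where you say it is: the requirement $\pi^\dagger\subseteq\tau$ cannot be met in general. Already for $N=3$ with two Baer subplanes $B_1,B_2$ not contained in the plane $H$, the two lines $\ell_1=\alpha_1\cap H$ and $\ell_2=\alpha_2\cap H$ typically span $H$, so no line $\tau$ of $H$ contains both, and your pencil argument never gets off the ground. Neither of your suggested remedies is carried out, and the second (``a weaker linear-dependence condition'') is too vague to be a plan.

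The fix --- and this is what the paper does --- is to drop the demand that $\tau$ contain the whole line $\ell_i$ and instead choose a \emph{single} point $P_i\in B_i\cap H$ for each $i\leq j$. Then the $j\leq N-1$ points $P_1,\ldots,P_j$ lie in some $(N-2)$-space $H'\subseteq H$, and the pencil through $H'$ exists automatically. The price is that your clean count of ``at most one bad hyperplane per $B_i$'' is no longer valid; you must instead bound, for each Baer subplane $B_i$, the number of hyperplanes $\pi\neq H$ through $H'$ that pick up a point of $B_i\setminus H$. Since any such point $R$ gives a line $P_iR\subseteq\alpha_i$ meeting $B_i$ in more than one point, that line is one of the $\sqrt{q}+1$ Baer sublines of $B_i$ through $P_i$, and each such line determines at most one hyperplane of the pencil. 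Hence at most $(N-1)(\sqrt{q}+1)$ hyperplanes of the pencil other than $H$ are bad, and $q\geq N^2$ gives $(N-1)(\sqrt{q}+1)\leq q-1<q$, so a good $H^*$ exists. This is precisely where the hypothesis $q\geq N^2$ enters; in your version it was never actually used.
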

\begin{proof}
Let $H$ be an hyperplane containing all but at most $(n-1)$ of the sets $B_i$, and relabel the sets $B_i$ that are not contained in $H$ as $B_1,\ldots,B_s$, where $s\leq n-1$. Since each $B_i$ is a blocking set, we find (at least) one point, say $P_i$ contained in $B_i\cap H$, $i=1,\ldots,s$. The $s$ points $P_1,P_2,\ldots,P_s$, span at most an $(n-2)$-dimensional space of $H$ so they are contained in some $n-2$-dimensional space $H'$ which is a hyperplane of $H$. 
There are $q+1$ hyperplanes of $\PG(N,q)$ through $H'$, one of which is $H$ and contains $B_{s+1},\ldots,B_k$. Of the other $q$ hyperplanes, at most $(N-1)(\sqrt{q}+1)$ contain further points of $\mathcal{S}$: there are at most $N-1$ blocking sets $B_i$ not contained in $H$, and the point $P_i$ lies on at most $\sqrt{q}+1$ lines in $B_i$ containing further points of $B_i$.
Since $q\geq N^2$, we have that $N\leq \sqrt{q}$, hence $(N-1)(\sqrt{q}+1)\leq q-1<q$. 
It follows that at least one hyperplane $\pi$ through $H'$ does not contain any points of $\mathcal{S}$ outside of $H'$. Since $\pi$ meets $\mathcal{S}$ in a set not spanning $\pi$, $\mathcal{S}$ is not a strong blocking set.
\end{proof}


We obtain the following corollary.

\begin{corollary} \label{cor2} Let $q\geq N^2$.
The union of $i$ lines and $k$ Baer subplanes in $\PG(N,q)$, with $i\leq \lfloor\frac{N}{2}\rfloor$ and $k\leq N-1$ is not a strong blocking set.
\end{corollary}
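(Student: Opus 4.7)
The plan is to deduce Corollary \ref{cor2} directly from the contrapositive of Lemma \ref{lemmaunion}. Writing the putative strong blocking set as $\mathcal{S}=\ell_1\cup\cdots\cup\ell_i\cup B_1\cup\cdots\cup B_k$, where the $\ell_j$'s are the $i\leq\lfloor N/2\rfloor$ lines and the $B_j$'s are the $k\leq N-1$ Baer subplanes, I want to exhibit a hyperplane $H$ of $\PG(N,q)$ that contains all but at most $N-1$ of these $i+k$ blocking sets. Lemma \ref{lemmaunion} will then prevent $\mathcal{S}$ from being a strong blocking set.

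The key ingredient is a dimension count. The $i$ lines jointly span a subspace of dimension at most $2i-1$: the first line is a $1$-dimensional subspace, and each further line increases the dimension of the span by at most $2$. The hypothesis $i\leq\lfloor N/2\rfloor$ gives $2i-1\leq N-1$, so the $i$ lines all lie inside some hyperplane $H$ of $\PG(N,q)$. Consequently, the only elements of the family $\{\ell_1,\dots,\ell_i,B_1,\dots,B_k\}$ that might fail to be contained in $H$ are Baer subplanes, of which there are at most $k\leq N-1$. Lemma \ref{lemmaunion} (whose hypotheses $q\geq N^2$ and $N\geq 3$ are inherited from the corollary) then yields the conclusion.

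There is essentially no obstacle in this plan; the work has been done inside Lemma \ref{lemmaunion}, and the corollary is a bookkeeping observation about how few disjoint lines fit into a hyperplane. Two degenerate points deserve a brief word. First, if $i=0$ one simply picks any hyperplane, which vacuously contains all $0$ lines and misses at most $k\leq N-1$ of the $B_j$'s. Second, Lemma \ref{lemmaunion} is stated for disjoint minimal blocking sets; lines and Baer subplanes are indeed minimal, and if the original collection fails to be pairwise disjoint one may pass to a disjoint subcollection whose union is contained in $\mathcal{S}$, without increasing the number of sets lying outside $H$, so the hypothesis of the lemma remains applicable.
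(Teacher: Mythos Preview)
Your proof is correct and follows essentially the same route as the paper: use the dimension bound $\dim\langle\ell_1,\dots,\ell_i\rangle\leq 2i-1\leq N-1$ to place all the lines inside a hyperplane, then invoke Lemma~\ref{lemmaunion} since at most $k\leq N-1$ Baer subplanes can lie outside that hyperplane. Your extra remarks on the degenerate cases are not needed but do no harm.
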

\begin{proof}
    Let $B_1,\ldots,B_i$ be the $i$ lines and $B_{i+1},\ldots,B_{i+k}$ be the $k$ Baer subplanes. Since $i$ lines span at most a $2i-1$-dimensional space, there is a hyperplane containing $B_1,\ldots,B_i$. Since $k\leq N-1$, the result follows from Lemma \ref{lemmaunion}.
    
    \end{proof}


In the case that we only have $k=N$ sets $B_i$ making up the set $\mathcal{S}$, it is trivially true (when $N\geq 3$) that all but $N-1$ of those $N$ are contained in a hyperplane of $\PG(N,q)$.
\begin{corollary}\label{cor3} Let $q\geq N^2$. The union of $N$ lines and/or subplanes does not form a strong blocking set in $\PG(N,q)$.
\end{corollary}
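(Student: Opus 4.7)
The plan is to derive Corollary \ref{cor3} as an immediate consequence of Lemma \ref{lemmaunion} applied with $k=N$. The only hypothesis of the lemma that requires any checking is the existence of a hyperplane $H$ containing all but at most $N-1$ of the $B_i$; for $k=N$ this amounts simply to exhibiting a single hyperplane containing at least one of the $B_i$.

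Such a hyperplane is trivially available. Each $B_i$ is either a line (spanning a projective $1$-space) or a Baer subplane (spanning a projective $2$-space), so the span of $B_i$ has dimension at most $2$, and since $N\geq 3$ this is at most $N-1$. Hence $\langle B_1\rangle$, say, is contained in some hyperplane $H$ of $\PG(N,q)$. With this choice of $H$, at most $N-1$ of the sets $B_i$ (namely $B_2,\ldots,B_N$) lie outside $H$, so the hypothesis of Lemma \ref{lemmaunion} is satisfied. Taking the contrapositive of that lemma, $\mathcal{S}=\bigcup_{i=1}^N B_i$ cannot be a strong blocking set.

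I do not foresee any real obstacle: all the substantive work is done by Lemma \ref{lemmaunion}, and the corollary merely packages the edge case in which the number of blocking components equals the ambient projective dimension. The one point worth a brief remark is that the lemma was stated for pairwise disjoint $B_i$; if some of the $N$ sets overlap then $\mathcal{S}$ has strictly fewer points, and the same argument (applied after discarding components contained in the union of earlier ones, or directly since the hyperplane $H$ through $B_1$ still misses no more than $N-1$ of the remaining components) gives the result.
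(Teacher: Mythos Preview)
Your proof is correct and is essentially the same as the paper's: the paper simply observes (in the sentence preceding the corollary) that when $k=N$ it is trivially true, for $N\geq 3$, that some hyperplane contains at least one $B_i$, and then invokes Lemma~\ref{lemmaunion}. Your additional remark about the disjointness assumption is a reasonable clarification that the paper leaves implicit.
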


In \cite{barat}, Bar\'at and Storme showed the following.
\begin{result}\label{thm:barat}
    Let $B$ be a minimal $s$-fold blocking set in $\PG(N,q)$, $q=p^h$, $p$ prime, $N\geq 3$, $q>661$, with \[
    |B|< sq +c_pq^{2/3} -(s-1)(s-2)/2,
    \]
    where $s<\min(c_pq^{1/6}, q^{1/4}/2)$, $c_2=c_3=2^{-1/3}$ and $c_p=1$ for $p>3$.
    Then $B$ contains the disjoint union of $s$ disjoint lines and/or Baer subplanes.
\end{result}

Combining the results by Barat and Storme on $s$-fold blocking sets with the previous lemma we have the following non-existence result.

\begin{theorem}
    
\label{main1} 
The size of a strong blocking set in $\PG(N,q)$, $N\geq3$, $q=p^h$, $p$ prime, $q\geq \max\{661,a_pN^6,16N^4\}$, where $a_p=4$ for $p=2,3$ and $a_p=1$ for $p>3$,

is at least $$Nq+ c_pq^{2/3} -(N-1)(N-2)/2,$$ 

\end{theorem}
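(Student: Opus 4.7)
The plan is a proof by contradiction combining the structural theorem of Bar\'at--Storme (Result \ref{thm:barat}) with Corollary \ref{cor3}. I will assume $\mathcal{B}$ is a strong blocking set in $\PG(N,q)$ with $|\mathcal{B}| < Nq + c_p q^{2/3} - (N-1)(N-2)/2$ and derive a structural contradiction. The first step is straightforward: since every strong blocking set is $N$-fold, one can choose a minimal $N$-fold blocking set $B$ contained in $\mathcal{B}$, and clearly $|B| \le |\mathcal{B}| < Nq + c_p q^{2/3} - (N-1)(N-2)/2$.

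Next I verify that the numerical assumptions of Result \ref{thm:barat} are met with $s = N$. The hypothesis $q \ge \max\{661, a_p N^6, 16N^4\}$ translates, by direct computation of the exponents, into the conditions $q > 661$, $s < c_p q^{1/6}$ (for $p \in \{2,3\}$ one has $c_p^{-6} = 4 = a_p$, and for $p > 3$, $c_p = 1 = a_p$), and $s < q^{1/4}/2$. Applying Result \ref{thm:barat} to the minimal $N$-fold blocking set $B$ yields that $B$ contains $N$ pairwise disjoint lines and/or Baer subplanes $L_1, \dots, L_N$.

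The key minimality argument comes next. The disjoint union $L_1 \cup \cdots \cup L_N$ is itself an $N$-fold blocking set (each $L_i$ is a $1$-fold blocking set, and disjointness adds up the multiplicities), so the minimality of $B$ forces $B = L_1 \cup \cdots \cup L_N$. This identifies the extracted core of $\mathcal{B}$ with a union of $N$ disjoint minimal blocking sets, which is precisely the setup of Lemma \ref{lemmaunion} and Corollary \ref{cor3}. Corollary \ref{cor3} (whose hypothesis $q \ge N^2$ follows from $q \ge 16N^4$) then asserts that $L_1 \cup \cdots \cup L_N$ cannot be a strong blocking set, providing the sought contradiction.

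The step I expect to require the most care is the last one: passing from "$B = L_1 \cup \cdots \cup L_N$ sits inside the strong set $\mathcal{B}$" to a genuine contradiction, since a priori $\mathcal{B}$ may strictly contain $B$ and strongness of $\mathcal{B}$ does not automatically transfer to the subset $B$. Handling this cleanly will either require arguing that the minimal $N$-fold extraction can be arranged to equal $\mathcal{B}$ itself (by re-choosing $\mathcal{B}$ to be a minimal strong blocking set and leveraging Lemma \ref{lemmaunion} to exclude extra points), or a more quantitative counting of "bad" hyperplanes on which $B \cap H$ fails to span $H$, showing that repairing them forces $|\mathcal{B}|$ to exceed the claimed bound. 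This is the delicate interplay between Lemma \ref{lemmaunion} and Result \ref{thm:barat} that the theorem relies on.
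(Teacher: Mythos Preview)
Your outline matches the paper's proof: assume a too-small strong blocking set $\mathcal{S}$, pass to a minimal $N$-fold blocking set $B\subseteq\mathcal{S}$, apply Result~\ref{thm:barat} (the numerical verification you give is correct) to extract $N$ disjoint lines and/or Baer subplanes $B'=L_1\cup\cdots\cup L_N$ inside $B$, and then invoke Corollary~\ref{cor3}. Your step that minimality of $B$ forces $B=B'$ is a correct observation, though the paper does not use it and simply works with $B'\subseteq\mathcal{S}$.

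You have correctly located the only non-routine point, and your option~(a) will not close it: taking $\mathcal{S}$ to be a minimal \emph{strong} blocking set does not make it a minimal $N$-fold blocking set, so there is no mechanism forcing $\mathcal{S}=B'$. The paper takes your option~(b). From the \emph{proof} of Lemma~\ref{lemmaunion} (not merely the statement of Corollary~\ref{cor3}) one obtains an $(N-2)$-space $\pi$ such that, among the $q+1$ hyperplanes through $\pi$, at least $q-(N-1)(\sqrt{q}+1)$ of them meet $B'$ only inside $\pi$. One then bounds the leftover: $|\mathcal{S}\setminus B'|\le |\mathcal{S}|-N(q+1)<c_pq^{2/3}$, and each such extra point lies in at most one hyperplane through $\pi$. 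Under the hypothesis $q\ge 16N^4$ one has $q-(N-1)(\sqrt{q}+1)>c_pq^{2/3}$, so some hyperplane through $\pi$ meets all of $\mathcal{S}$ inside $\pi$, contradicting strongness. This quantitative pigeonhole is exactly the missing piece you anticipated; the paper compresses it into the line ``$|\mathcal{S}\setminus B'|<q+1$, hence at least one hyperplane through $\pi$ contains no further points of $\mathcal{S}$''.
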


\begin{proof} Assume to the contrary that $\mathcal{S}$ is a strong blocking set of size $S<Nq+ c_pq^{2/3} -(N-1)(N-2)/2$.
Then $\mathcal{S}$ is an $N$-fold blocking set. Let $B$ be the a minimal $N$-fold blocking set contained in $\mathcal{S}$. By Result \ref{thm:barat}, $B$ contains a set $B'$ which is the disjoint union of $N$ lines and/or Baer subplanes. 
From Corollary \ref{cor3}, we know that there is a hyperplane $H$ meeting $B'$ in points spanning at most an $(N-2)$-space. Let $\pi$ be that $(N-2)$-space.
There are at most $Nq+ c_pq^{2/3} -(N-1)(N-2)/2-N(q+1)<q+1$ points in $\mathcal{S}\setminus \mathcal{B'}$. Hence, at least one hyperplane through $\pi$ contains no further points of $\mathcal{S}$, a contradiction since $\mathcal{S}$ is a strong blocking set.

\end{proof}

Using the correspondence between minimal codes and strong blocking sets introduced in Subsection \ref{subs}, we obtain the following corollary. 

\begin{corollary}\label{cor}
    A minimal code of dimension $k$ over $\F_q$, $q=p^h$, $p$ prime, $q\geq \max\{661,(k-1)^2\}$, has length at least $$(k-1)q+ c_pq^{2/3} -(k-2)(k-3)/2.$$
\end{corollary}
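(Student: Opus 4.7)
The plan is to invoke the one-to-one correspondence between minimal codes and strong blocking sets recalled in Subsection~\ref{subs}: a minimal code of length $n$ and dimension $k$ over $\F_q$ corresponds to a strong blocking set of $n$ points in $\PG(k-1,q)$. Consequently, any lower bound on the minimum size of a strong blocking set in $\PG(k-1,q)$ is automatically a lower bound on the length of any minimal $[n,k]$-code.

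Setting $N = k-1$, I would then apply Theorem~\ref{main1} directly. Substituting $N = k-1$ into the bound $Nq + c_p q^{2/3} - (N-1)(N-2)/2$ yields $(k-1)q + c_p q^{2/3} - (k-2)(k-3)/2$, which is precisely the quantity in the statement. So the argument is essentially a one-line substitution once the code/geometry correspondence is invoked.

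The only point that needs care is reconciling the hypotheses. Theorem~\ref{main1} requires $N \geq 3$ and $q \geq \max\{661, a_p N^6, 16 N^4\}$, which for $N = k-1$ translates into $k \geq 4$ together with $q \geq \max\{661, a_p (k-1)^6, 16(k-1)^4\}$. The condition $q \geq \max\{661,(k-1)^2\}$ stated in the corollary is therefore somewhat looser than what Theorem~\ref{main1} strictly provides, so I would either read the stated condition as an informal abbreviation of the full hypothesis, or restate the corollary under $q \geq \max\{661, a_p (k-1)^6, 16(k-1)^4\}$. Beyond this bookkeeping there is no genuine obstacle: no new geometric or combinatorial input is needed, and the proof reduces to citing the equivalence and the previous theorem.
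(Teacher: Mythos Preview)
Your approach is exactly the paper's: the corollary is presented as an immediate consequence of the code/strong-blocking-set correspondence from Subsection~\ref{subs} together with Theorem~\ref{main1}, with no additional argument given. Your observation about the hypotheses is also correct --- the condition $q\geq (k-1)^2$ stated in the corollary is strictly weaker than the $q\geq\max\{661,\,a_p(k-1)^6,\,16(k-1)^4\}$ required by Theorem~\ref{main1}, so the stated hypothesis appears to be a slip (or an informal abbreviation) in the paper.
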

\begin{remark} Going through the details of the paper \cite{barat}, we see that for the case $N=3$, the bound $q>661$ can be improved to $q>9$. Hence, for this particular case, we have showed that the size of a strong blocking set in $\PG(3,q)$, $q>9$, is at least $3q+c_pq^{2/3}-1$; and that a minimal code of dimension $4$ over $GF(q)$, $q>9$, has length at least $3q+c_pq^{2/3}-1$.
\end{remark}

\section{Construction of a higgledy-piggledy set of lines of size \texorpdfstring{$2N-2$} arising from a spread}
In this section, we derive our second main theorem. For ease of notation, since $N$ is odd, we will use $N=2n-1$ throughout this section. We will show that if $q\leq \frac{4n}{ln(2)}$, there exists a higgledy-piggledy line set of size $4n-4$ contained in a Desarguesian spread of $\PG(2n-1,q)$. We construct this higgledy-piggledy set in two parts: in the next lemma, we will find a suitable set of $2n-1$ lines, which we will extend in the main theorem to a higgledy-piggledy set of size $4n-4$.

\begin{lemma} \label{lemma:starting lines}

Let $q\geq 4n$, $n\geq 2$. There exists a set of $2n-1$ lines $\mathcal{L}$ of a Desarguesian spread $\mathcal{D}$ in 
$\mathrm{PG}(2n-1,q)$ such that the number of $(2n-3)$-spaces meeting all lines of $\mathcal{L}$ is at most $2n(q+1)^{2n-3}$.

\end{lemma}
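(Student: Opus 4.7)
The plan is to construct $\mathcal{L}$ via field reduction and bound the number of $(2n-3)$-spaces meeting it through a parametrization by transversal points on $2n-3$ of the lines. The Desarguesian spread $\mathcal{D}$ in $\PG(2n-1,q)$ arises via field reduction from $\PG(n-1,q^2)$, so a choice of $2n-1$ spread lines corresponds to a choice of $2n-1$ points in $\PG(n-1,q^2)$. I take these points in sufficiently general position --- for concreteness as $2n-1$ points of a normal rational curve, so that any $n$ of the corresponding vectors $v_1,\dots,v_{2n-1}\in \mathbb{F}_{q^2}^n$ are $\mathbb{F}_{q^2}$-linearly independent. Writing $L_i=\mathbb{F}_{q^2}v_i$, a $(2n-3)$-space $\pi\subset\PG(2n-1,q)$ corresponds to a $(2n-2)$-dimensional $\mathbb{F}_q$-subspace $W\le\mathbb{F}_{q^2}^n$, and $\pi$ meets $L_i$ precisely when $W\cap \mathbb{F}_{q^2}v_i\neq\{0\}$.

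To bound the number of such $\pi$, I parametrize them by their traces on the first $2n-3$ lines. Given a tuple $(P_1,\dots,P_{2n-3})\in L_1\times\cdots\times L_{2n-3}$ (there are $(q+1)^{2n-3}$ of them), set $\sigma=\langle P_1,\dots,P_{2n-3}\rangle$. Under the general-position hypothesis, a generic choice of tuple gives $\dim\sigma=2n-4$, and every $(2n-3)$-space through $\sigma$ corresponds to a point of the quotient $\PG(2n-1,q)/\sigma\cong\PG(2,q)$. The further conditions that $\pi$ meet $L_{2n-2}$ and $L_{2n-1}$ amount to requiring $\pi/\sigma$ to lie on the images $\overline{L_{2n-2}}$ and $\overline{L_{2n-1}}$ of these two lines in $\PG(2,q)$. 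Generically these images are distinct projective lines of $\PG(2,q)$ and meet in a unique point, so each generic tuple yields a unique $\pi$, and each $\pi$ is counted at least once in this process; the generic contribution is therefore at most $(q+1)^{2n-3}$ distinct spaces.

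The main obstacle is controlling the degenerate tuples --- those for which either $\dim\sigma<2n-4$ (the $P_i$ are not in linear general position along the chosen lines) or $\overline{L_{2n-2}}=\overline{L_{2n-1}}$ in $\PG(2,q)$ (yielding up to $q+1$ extensions rather than one). The degenerate locus is cut out by explicit polynomial conditions in the coordinates $P_i=[\alpha_iv_i]$ with $\alpha_i\in\mathbb{F}_{q^2}$, and using the normal-rational-curve structure of $\mathcal{P}$ the vanishing of these conditions reduces to Vandermonde-type determinantal equations. The hypothesis $q\geq 4n$ ensures enough room to conclude that the total contribution of the degenerate locus is at most $(2n-1)(q+1)^{2n-3}$. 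Combining the generic and degenerate contributions yields the claimed upper bound $2n(q+1)^{2n-3}$. The hardest step will be this explicit bookkeeping of degeneracies; it is essential that the factor $(2n-1)$ not grow with further hidden sources of degeneracy, which is where the general-position property of the normal rational curve is used crucially.
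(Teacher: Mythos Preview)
Your approach is genuinely different from the paper's. The paper proceeds by induction on $n$: it assumes a configuration of $2n-4$ spread lines in $\Sigma=\PG(2n-3,q)$ together with an auxiliary spread line $M$ satisfying quantitative transversal bounds, embeds $\Sigma$ in $\PG(2n-1,q)$, extends the spread, and adjoins two new spread lines $M_0,M_1$ whose span $\langle M_0,M_1\rangle$ meets $\Sigma$ exactly in $M$. A case analysis on how a $(2n-3)$-space intersects $\Sigma$ (in a hyperplane of $\Sigma$, or in a $(2n-5)$-space meeting $M$ in a line, a point, or not at all) then propagates the bound. Your idea---fix all $2n-1$ spread lines at once via a normal rational curve in $\PG(n-1,q^2)$ and count transversals by projecting from the span $\sigma$ of points chosen on the first $2n-3$ lines---is more direct in spirit and would be attractive if it worked.

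The gap, however, is real: the degeneracy bookkeeping is not a detail to be filled in, it is the entire content of the argument, and you have not done it. There are at least four separate degenerate phenomena to control: (a) the points $P_1,\dots,P_{2n-3}$ are $\F_q$-dependent; (b) $L_{2n-2}$ meets $\sigma$; (c) $L_{2n-1}$ meets $\sigma$; (d) the projected images $\overline{L_{2n-2}}$ and $\overline{L_{2n-1}}$ coincide in the quotient plane. Each of these inflates the number of admissible $\pi$ for that tuple (e.g.\ if $L_{2n-2}$ meets $\sigma$ then \emph{every} $\pi\supset\sigma$ already meets $L_{2n-2}$, so the count jumps from $1$ to at least $q+1$), and you must bound both the number of degenerate tuples of each type and their individual contributions, including interactions. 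Case (a) is more serious than your sketch acknowledges: once $n\geq 4$ you have $2n-3>n$ vectors $v_i$ in $\F_{q^2}^n$, so the $\F_{q^2}$-kernel of $(\mu_i)\mapsto\sum\mu_i v_i$ has dimension $n-3\geq 1$, and the locus of tuples $(\alpha_i)$ admitting a relation with $\lambda_i\alpha_i$ in this kernel and $\lambda_i\in\F_q$ is a genuine variety whose $\F_q$-point count must be estimated. The NRC condition (any $n$ of the $v_i$ independent) does not by itself produce the bound you claim, and you give no mechanism tying the hypothesis $q\geq 4n$ to the asserted ceiling $(2n-1)(q+1)^{2n-3}$; the split ``$1$ generic $+$ $(2n-1)$ degenerate $=2n$'' reads as reverse-engineered from the target rather than derived. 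The paper's inductive construction sidesteps all of this: at each step only two lines are added in a prescribed geometric position relative to a single hyperplane $\Sigma$, so the case analysis is finite and explicit rather than a parametrised family of Vandermonde-type conditions.
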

\begin{proof}

We will use induction on $n$ to show that there exist:
\begin{itemize}
\item a set of $2n-2$ lines $\mathcal{L}$ of a Desarguesian spread $\mathcal{D}$ in $\PG(2n-1,q)$ such that the number of $(2n-3)$-spaces meeting all lines of $\mathcal{L}$ is at most $(n-1)(q+1)^{2n-2}$; 
\item a line $M$ of $\mathcal{D}$ such that 
\begin{itemize}
\item the number of $(2n-3)$-spaces meeting $M$ in exactly a point and meeting all lines of $\mathcal{L}$ is at most $2(n-1)(q+1)^{2n-3}$, and 
\item the number of $(2n-3)$-spaces containing $M$ and meeting all lines of $\mathcal{L}$ is at most $2(n-1)(q+1)^{2n-4}$.
\end{itemize}
\end{itemize}
Consider the base case $n=2$. Let $\mathcal{L}$ be any two distinct lines of a Desarguesian spread in $\PG(3,q)$. It is clear that the number of lines meeting both lines of $\mathcal{L}$ is precisely $(q+1)^{2}$. Furthermore, let $M$ be any line of the Desarguesian spread, not in $\mathcal{L}$. Then the lines meeting the three lines of $\mathcal{L}\cup \{M\}$ are precisely the $q+1\leq 2(q+1)$ lines of the opposite regulus determined by $\mathcal{L}$ and there is no line containing $M$ and meeting the two lines of $\mathcal{L}$.

So now assume that there is a set $\mathcal{L}$ of $2n-4$ lines of a Desarguesian spread $\mathcal{D}_\Sigma$ in $\Sigma=\PG(2n-3,q)$ such that the number of $(2n-5)$-spaces meeting all lines of $\mathcal{L}$ is at most $(n-2)(q+1)^{2n-4}$, and a line $M$, not in $\mathcal{L}$ such that the number of $(2n-5)$-spaces meeting all lines of $\mathcal{L}$ meeting $M$ in a point is at most $2(n-2)(q+1)^{2n-5}$ and the number of $(2n-5)-$spaces containing $M$ and meeting all lines of $\mathcal{L}$ is at most $2(n-2)(q+1)^{2n-6}$.

Now embded $\Sigma$ in $\PG(2n-1,q)$ and extend the Desarguesian spread $\mathcal{D}_\Sigma$ in $\Sigma$ to a Desarguesian spread  $\mathcal{D}$ in $\PG(2n-1,q)$. 
Let $M_0$ and $M_1$ be two lines of $\mathcal{D}$, not in $\Sigma$ such that the $3$-space $\mu=\langle M_0,M_1\rangle$ meets $\Sigma$ precisely in the line $M$.
We will first show that the number of $(2n-3)$-spaces meeting $\mathcal{L}\cup \{M_0,M_1\}$ is at most $(n-1)(q+1)^{2n-2}$.

Any $(2n-3)$-space $S$ meeting all lines of $\mathcal{L}\cup \{M_0,M_1\}$ meets $\Sigma$ in either a $(2n-4)$-dimensional or a $(2n-5)$-dimensional space.

First consider the $(2n-3)$-spaces meeting $\Sigma$ in a hyperplane of $\Sigma$. Note that in the case that $S$ meets $\Sigma$ in a hyperplane of $\Sigma$, it is impossible that $S$ contains $\mu$, since $M_0$ is contained in $\mu$ and disjoint from $\Sigma$.

Each of the $q^{2n-3}+q^{2n-4}$ hyperplanes $H$ of $\Sigma$ meeting $M$ in exactly a point give rise to exactly one $(2n-3)$-space meeting all lines of $\mathcal{L}\cup \{M_0,M_1\}$, namely the $(2n-3)$-space spanned by $H$ and the unique transversal line to $M_0,M_1$ through $H\cap M$.
Each of the $\frac{q^{2n-4}-1}{q-1}$ hyperplanes of $\Sigma$ through $M$ gives rise to $q+1$ $(2n-3)$-spaces meeting all lines of $\mathcal{L}\cup \{M_0,M_1\}$, namely one for each of the $q+1$ planes of $\mu$ through $M$.
This gives us $q^{2n-3}+q^{2n-4}+\frac{q^{2n-4}-1}{q-1}(q+1)$  subspaces meeting all lines of $\mathcal{L}\cap \{M_0,M_1\}$ and meeting $\Sigma$ in a hyperplane. This number is smaller than $2q^{2n-3}$ if $q\geq 3$.

Now consider spaces meeting all lines of $\mathcal{L}\cup \{M_0,M_1\}$ and meeting $\Sigma$ in a $(2n-5)-$space. We will make a distinction depending on their intersection size with the line $M$.

First consider such spaces $S$ containing the line $M$. Each of the $(2n-5)$-spaces $\tau$ meeting all lines of $\mathcal{L}$ and containing $M$ lies on at most $(q+1)(q^2+q+1)$ $(2n-3)$-spaces meeting all lines of $\mathcal{L}\cup \{M_0,M_1\}$ since each of the $q+1$ planes through $M$ in $\mu$ gives rise to a $(2n-4)$-space meeting all lines of $\mathcal{L}\cup \{M_0,M_1\}$, and each of those $(2n-4)$-spaces lies on $q^2+q+1$ $(2n-3)$-spaces. There are at most $2(n-2)(q+1)^{2n-6}$ such $(2n-5)$-spaces $\tau$.

Each of the $(2n-5)$-spaces $\tau$ meeting all lines of $\mathcal{L}$ and meeting $M$ in a point lies on at most $(q^2+q+1)$ $(2n-3)$-spaces meeting all lines of $\mathcal{L}\cup \{M_0,M_1\}$ namely one for each of the $(q^2+q+1)$ $(2n-3)$-spaces through the unique $(2n-4)$-space spanned by the $(2n-5)$-space $\tau$ and the unique transversal line through the point $\tau\cap \mu$ to the regulus defined by $M_0,M_1,M$. There are at most $2(n-2)(q+1)^{2n-5}$ such $(2n-5)$-spaces $\tau$.

Finally, each of the $(2n-5)$-spaces $\tau$ meeting all lines of $\mathcal{L}$ but not intersecting $M$ lie on $(q+1)^2-(q+1)$ $(2n-3)$-spaces intersecting all lines of $\mathcal{L}\cup \{M_0,M_1\}$, namely those spanned by $\tau$ and a line meeting both $M_0$ and $M_1$, but not $M$ (if the subspace $S$ would meet $M$, then $S$ would meet $\Sigma$ in $(2n-4)$-space). There are at most $(n-2)(q+1)^{2n-4}$ such $(2n-5)$-spaces $\tau$.

We find that the total number of $(2n-3)$-spaces $S$ meeting  all lines of $\mathcal{L}\cup \{M_0,M_1\}$ is at most
$$2q^{2n-3}+2(n-2)(q+1)^{2n-6}(q+1)(q^2+q+1)+ 2(n-2)(q+1)^{2n-5}(q^2+q+1)$$
$$+(n-2)(q+1)^{2n-4}(q+1)^2\leq (n-1)(q+1)^{2n-2},$$
since $q\geq 4n$.

We will now show that there is a line $N$ such that the number of $(2n-3)$-spaces meeting $N$ in a point and meeting the lines $\mathcal{L}\cup \{M_0,M_1\}$ is at most $2(n-1)(q+1)^{2n-3}$, while the number of $(2n-3)$-spaces containing $N$ and meeting the lines $\mathcal{L}\cup \{M_0,M_1\}$ is at most $2(n-1)(q+1)^{2n-4}$.

We already know that there is a set $\mathcal{S}$ of at most $(n-1)(q+1)^{2n-2}$ $(2n-3)$-spaces $S$ meeting the lines $\mathcal{L}\cup \{M_0,M_1\}$. Each of those spaces covers $\frac{q^{2n-2}-1}{q-1}$ points. Since there are $\frac{q^{2n}-1}{q-1}-(2n-2)(q+1)$ points not on the lines $\mathcal{L}$, it follows that there exists a point $P$, not on a line of $\mathcal{L}\cup \{M_0,M_1\}$ such that the number of spaces of $\mathcal{S}$ through it is at most \[\frac{(n-1)(q+1)^{2n-2}\frac{q^{2n-2}-1}{q-1}}{\frac{q^{2n}-1}{q-1}-(2n-2)(q+1)}\leq 2(n-1)(q+1)^{2n-4}.\]

Let $N$ be the spread element of $\mathcal{D}$ through $P$ and let $\mathcal{S}_P$ be the subset of spaces of $\mathcal{S}$ containing $P$. Let $x$ be the number of spaces of $\mathcal{S}_P$ containing $N$, and $y$ is the number of spaces of $\mathcal{S}_P$ not containing $N$; we have just seen that $x+y\leq 2(n-1)(q+1)^{2n-4}$ Since the lines of $\mathcal{L}\cup \{M_0,M_1\}$, as well as $N$ are elements of $\mathcal{D}$, we find that the the number of $(2n-3)$-spaces meeting all lines of $\mathcal{S}$ is given by $x+(q+1)y.$ The number of $(2n-3)$-spaces meeting all lines of $\mathcal{L}\cup \{M_0,M_1\}$ and meeting $N$ exactly in a point is $(q+1)y\leq (q+1)(x+y)\leq 2(n-1)(q+1)^{2n-4}(q+1)=2(n-1)(q+1)^{2n-3}.$

Finally, observe that, since $q>\frac{4n}{ln(2)}$, $2n(q+1)^{2n-3}>2(n-1)(q+1)^{2n-3}+2(n-1)(q+1)^{2n-4}$.

The statement now follows by induction.
\end{proof}

\begin{theorem}\label{main2}
Let $\Sigma$ be the space $\PG(2n-1,q)$, with $n\geq 2$ and $q>4n/ln(2)$. There exists a higgledy-piggledy set of size at most $2(2n-1)-2$, that is, a strong blocking set $\mathcal{S}$ in $\Sigma$ which is the union of at most $4n-4$ lines $\ell_1,\dots,\ell_{4n-4}$.
\end{theorem}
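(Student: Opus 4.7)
My plan is to start from the $2n-1$ spread lines $\mathcal{L}_0$ produced by Lemma~\ref{lemma:starting lines}, for which the set $B_0$ of $(2n-3)$-subspaces meeting every line of $\mathcal{L}_0$ has size at most $2n(q+1)^{2n-3}$, and to add $2n-3$ further lines so that the final ``bad'' set becomes empty. Since a set of lines forms a higgledy-piggledy arrangement exactly when no $(2n-3)$-subspace meets all of them, this is enough.

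I would add the $2n-3$ extra lines iteratively by a double-counting argument over the Desarguesian spread $\mathcal{D}$. At each stage, every $\pi\in B_j$ meets at most $|\pi|=(q^{2n-2}-1)/(q-1)$ spread lines, while the total number of spread lines is $(q^{2n}-1)/(q^2-1)$, so there exists a spread line $\ell_{j+1}\in\mathcal{D}$ with
\[ |B_{j+1}|\le |B_j|\cdot\frac{(q^{2n-2}-1)(q+1)}{q^{2n}-1}\le |B_j|\cdot\frac{q+1}{q^2}. \]
Iterating $2n-3$ times yields $|B_{2n-3}|\le 2n(1+1/q)^{2(2n-3)}$, and the hypothesis $q>4n/\ln(2)$ is precisely what makes $(1+1/q)^{4n-6}\le e^{(4n-6)/q}<2$.

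The main obstacle I anticipate is that pure averaging only bounds $|B|$ by roughly $4n$ after $2n-3$ additions, not by $0$. To finish, I would replace averaging by a direct geometric construction for the last line(s): once $|B_j|\le q$, the union of the remaining bad $(2n-3)$-subspaces contains at most $|B_j|\cdot(q^{2n-2}-1)/(q-1)$ points and hence does not cover $\PG(2n-1,q)$, so a point $P$ outside it exists; among the $(q^{2n-1}-1)/(q-1)$ lines through $P$, at most $|B_j|\cdot(q^{2n-2}-1)/(q-1)$ meet some remaining bad subspace, leaving at least one line through $P$ disjoint from every such subspace whenever $|B_j|\le q$. Bridging the gap from $\sim 4n$ down to $\le q$ is the delicate point; I expect to use the finer classification of the bad subspaces from the proof of Lemma~\ref{lemma:starting lines} --- those containing the distinguished line $M$ number only $O((q+1)^{2n-4})$, while those meeting $M$ in a single point number at most $O((q+1)^{2n-3})$ --- treating each subfamily separately, and invoking Result~\ref{stabiliser} to exploit the transitivity of the spread stabiliser on each spread line to amplify the averaging estimate.
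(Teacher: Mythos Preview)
Your averaging recursion is exactly the paper's: averaging over spread lines gives $|B_{j+1}|\le |B_j|\cdot (q+1)\theta_{2n-3}/\theta_{2n-1}$, and iterating $2n-3$ times yields $|B_{2n-3}|\lesssim 4n$, not $0$. You correctly identify this as the obstacle, but your proposed fixes do not stay within $4n-4$ lines. Your direct construction needs $|B_j|\le q$ to find a line through $P$ missing every bad subspace; that inequality first holds at $j=2n-3$ (where $|B_j|\lesssim 4n<q$), and adding the extra line then gives $4n-3$ lines, one too many. Trying it at $j=2n-4$ fails because $|B_{2n-4}|\approx 4nq>q$.

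The missing idea is precisely the sharp use of Result~\ref{stabiliser} you gesture at. For the \emph{last} line, do not average over spread lines; average over \emph{points}. After $2n-4$ additions ($4n-5$ lines), the average number of bad $(2n-3)$-spaces through a point is at most $|B_{2n-4}|\,\theta_{2n-3}/\theta_{2n-1}<2n(q+1)^{4n-7}/q^{4n-6}$, and the hypothesis $q>4n/\ln 2$ makes this strictly less than $1$. Hence some point $P$ lies on \emph{no} bad subspace. Now Result~\ref{stabiliser} says every point of the spread line $\ell_{4n-4}$ through $P$ has the same count, namely $0$, so $\ell_{4n-4}$ is disjoint from every remaining bad $(2n-3)$-space and $B_{2n-3}=\emptyset$. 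In other words, the point-average is a factor $(q+1)$ smaller than the spread-line average, and the spread transitivity lets you cash that saving in at the final step without paying the usual $(q+1)$ penalty; this is what closes the gap without extra lines or any finer classification from the lemma.
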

\begin{proof}
Let $\D$ denote the Desarguesian line spread of $\Sigma$, and let $\mathcal{L}=\{\ell_1,\dots,\ell_{2n-1}\}$ be a set of lines as obtained in Lemma \ref{lemma:starting lines}. We will show that we can add lines $\ell_{2n},\ell_{2n+1},\ldots,\ell_{4n-4}$ to $\mathcal{L}$ such that there is no subspace of dimension $(2n-3)$ meeting all lines $\ell_1,\ldots,\ell_{4n-4}$. 
Since it is possible that $\ell_i=\ell_j$ for some $i,j$, we will eventually obtain a set of at most $4n-4$ lines which is a higgledy-piggledy set $\mathcal{S}$: if $K$ is a hyperplane, it meets $\mathcal{S}$ in at least $2n-1$ points arising from the intersections of $K$ with the lines of $\mathcal{L}$, and by construction it is impossible that all points of $\mathcal{S}\cap K$ are contained in a hyperplane of $K$.

We double count the number of pairs 
\[
(H,Q)
\]
such that $Q\in H$ and $H$ is an $(2n-3)$-space with $H\cap \ell\neq \emptyset$ for any $\ell\in\mathcal{L}$.

By Lemma \ref{lemma:starting lines}, this number is at most
\[
2n(q+1)^{2n-3}\theta_{2n-3};
\]
where $\theta_{2n-3}=(q^{2n-2}-1)/(q-1)$ is the number of points of $H$.\\
On the other hand, the number of such pairs is also 
\[
\theta_{2n-1}M_1;
\]
where $M_1$ is the average number of $(2n-3)$-dimensional spaces $H$ meeting each of the lines $\ell_i$, $i=1,\dots,2n-1$ in at least a point and through a point $Q\in\Sigma$.

As a consequence 
\[
M_1\leq \frac{2n(q+1)^{2n-3}\theta_{2n-3}}{\theta_{2n-1}}.
\]
In particular, there exists a point $Q$ with the property that the number of $(2n-3)$-spaces through $Q$ meeting each of the lines $\ell_i$, $i=1,\dots,2n-1$ in at least a point is at most $M_1$. 
By Result \ref{stabiliser}, the unique line $\ell_{2n}$ of the spread $\mathcal{D}$ passing through $Q$ has the property that there are at most $(q+1)M_1$ $(2n-3)$-spaces meeting $\ell_{2n}$ and each of the lines $\ell_i$, $i=1,\dots,2n-1$ in at least a point. 
Note that $Q$ might be a point of one of the lines $\ell_i$, namely in such a case $\ell_{2n}\in \mathcal{L}$.
The number of pairs
\[
(H,Q)
\]
such that $H\cap \ell_i\neq\emptyset$ for any $i=1,\dots,2n$ and $Q$ belongs to $H$, is now at most
\[
(q+1)M_1\theta_{2n-3}.
\]
On the other hand, this number is also 
\[
\theta_{2n-1}M_2;
\]
where $M_2$ is the average number of $(2n-3)$-dimensional spaces $H$ through a fixed point $Q$ and meeting each of the lines $\ell_i$, $i=1,\dots,2n$ in at least a point.
As a consequence 
\[
M_2\leq \frac{(q+1)M_1\theta_{2n-3}}{\theta_{2n-1}}
\]

We can iterate the reasoning, taking each time a new line $\ell_{2n-1+i}$ contained in at most $M_i$ $(n-2)$-dimensional spaces $H$ meeting each of the previous lines, and double counting the pairs $(H,Q)$ such that $H\cap S_i\neq\emptyset$ and $Q$ belongs to $H$. 
We obtain
\[
M_i <\frac{(q+1)M_{i-1}\theta_{2n-3}}{\theta_{2n-1}}.
\]
Combining with $M_1<\frac{2n(q+1)^{2n-3}\theta_{2n-3}}{\theta_{2n-1}}$, we obtain 
\[
M_i< \frac{2n(q+1)^{2n-3+i-1}\theta_{2n-3}^i}{\theta_{2n-1}^i}.
\]
In particular, 
\[
M_{2n-3}< \frac{2n(q+1)^{4n-7}\theta_{2n-3}^{2n-3}}{\theta_{2n-1}^{2n-3}}
=2n(q+1)^{4n-7}\left(\frac{q^{2n-2}-1}{q^{2n}-1}\right)^{2n-3}<2n(q+1)^{4n-7}\left(\frac{1}{q^{2}}\right)^{2n-3}
\]

Observe that 
\[
(q+1)^{4n-7}<q^{4n-7}+(4n-6)q^{4n-8},
\]
holds for any $q$ such that 
\[
q>\frac{1}{e^{ln(2)/(4n-7)}-1}.
\]

Indeed $(q+1)^{4n-7}<q^{4n-7}+(4n-6)q^{4n-8}$ is equivalent to $(1+1/q)^{4n-7}<1+\frac{4n-6}{q}$. Assuming $q>4n-6$, the right hand side of this inequality is at most $2$. Isolating $q$, we obtain the claim.
Therefore, 
\[
M_{2n-3}<2n\left(\frac{1}{q}+\frac{4n-6}{q^2}\right).
\]

Finally, $2n\left(\frac{1}{q}+\frac{4n-6}{q^2}\right)<1$ is equivalent to $q^2-2nq-4n+6>0$, which is always true under our hypothesis.
It can be checked that if $q>4n/ln(2)$ and $n\geq 2$, then $q>\frac{1}{e^{ln(2)/(4n-7)}-1}$ as well. 
Since $M_{2n-3}<1$, there must be a point not contained in any $(2n-3)$-subspace meeting all the lines $\ell_i$, $i=1,\dots,4n-5$. Again using Result \ref{stabiliser}, we see that the unique line $\ell_{4n-4}$ through that point is such that there is no $(2n-3)$-subspace meeting all the lines $\ell_i$, $i=1,\dots,4n-4$.
\end{proof}

\section*{Acknowledgments}
The first author acknowledges the support by the Irish Research Council, grant n. GOIPD/2022/307 and by the Italian institute for high mathematics INdAM, as ``titolare di una borsa per l’estero dell’Istituto Nazionale di Alta Matematica".

\bibliography{Bibliography}
\end{document}